\newtheorem{theorem}{Theorem}[section]
\newtheorem{lemma}[theorem]{Lemma}
\newtheorem{proposition}[theorem]{Proposition}
\newtheorem{remark}[theorem]{Remark}
\theoremstyle{definition}
\theoremstyle{remark}
\newtheorem*{note*}{Note}
\numberwithin{equation}{section}
\newcommand{\rank}{\mathop{\operator@font rank}}
\newcommand{\conv}{\mathop{\operator@font conv}}
\newcommand{\vol}{\mathop{\operator@font vol}}
\newcommand{\onetagright}{\tagsleft@false}
\newcommand{\ls}{\leqslant}
\newcommand{\gr}{\geqslant}
\renewcommand{\epsilon}{\varepsilon}
\begin{document}
\small

\title{\bf Fractional Helly theorem for the diameter of convex sets}

\medskip

\author{Silouanos Brazitikos}

\date{}

\maketitle

\begin{abstract}
\footnotesize We provide a new quantitative version of Helly's theorem: there exists an absolute constant $\alpha >1$ with the following
property: if $\{P_i: i\in I\}$ is a finite family of convex bodies in ${\mathbb R}^n$ with ${\rm int}\left (\bigcap_{i\in I}P_i\right )\neq\emptyset $,
then there exist $z\in {\mathbb R}^n$, $s\ls \alpha n$ and $i_1,\ldots i_s\in I$ such that
\begin{equation*}
z+P_{i_1}\cap\cdots\cap P_{i_s}\subseteq cn^{3/2}\left(z+\bigcap_{i\in I}P_i\right),
\end{equation*}
where $c>0$ is an absolute constant. This directly gives a version of the ``quantitative" diameter theorem of
B\'{a}r\'{a}ny, Katchalski and Pach, with a polynomial dependence on the dimension. In the symmetric case the bound $O(n^{3/2})$ can be improved
to $O(\sqrt{n})$.
\end{abstract}

\section{Introduction}

The purpose of this work is to present a new quantitative versions of Helly's theorem; recall that the classical
result asserts that if ${\mathcal F}=\{ F_i:i\in I\}$ is a finite family of at least $n+1$ convex sets in ${\mathbb R}^n$
and if any $n+1$ members of ${\mathcal F}$ have non-empty intersection then $\bigcap_{i\in I}F_i\neq\emptyset $. Variants of
this statement have found important applications in discrete and computational geometry.
Quantitative Helly-type results were first obtained by B\'{a}r\'{a}ny, Katchalski and Pach in \cite{BKP-1982}
(see also \cite{BKP-1984}). In particular, they proved the following volumetric result:

\smallskip

{\sl Let $\{ P_i:i\in I\}$ be a family of closed convex sets in ${\mathbb R}^n$
such that $\left |\bigcap_{i\in I}P_i\right |>0$. There exist $s\ls 2n$ and $i_1,\ldots ,i_s\in I$ such that
\begin{equation}\left |P_{i_1}\cap \cdots \cap P_{i_s}\right |\ls C_n\left |\bigcap_{i\in I} P_i\right |,\end{equation}
where $C_n>0$ is a constant depending only on $n$.}

\smallskip

The example of the cube $[-1,1]^n$ in ${\mathbb R}^n$, expressed as an intersection of exactly $2n$ closed half-spaces,
shows that one cannot replace $2n$ by $2n-1$ in the statement above. The optimal growth of the constant $C_n$ as a function
of $n$ is not completely understood. The bound in \cite{BKP-1982} was $O(n^{2n^2})$ and it was conjectured that one might actually
have $C_n\ls n^{cn}$ for an absolute constant $c>0$. Nasz\'{o}di \cite{Naszodi-2015} has recently proved
a volume version of Helly's theorem with $C_n\ls (cn)^{2n}$, where $c>0$ is an absolute constant. In fact, a slight modification of
Nasz\'{o}di's argument leads to the exponent $\frac{3n}{2}$ instead of $2n$. In \cite{Brazitikos-2015}, relaxing the requirement that $s\ls 2n$ to the
weaker one that $s=O(n)$, we were able to improve the exponent to $n$:

\begin{theorem}[Brazitikos]\label{th:volume}There exists an absolute constant $\alpha >1$ with the following
property: for every family $\{P_i:i\in I\}$ of closed convex sets
in ${\mathbb R}^n$, such that $P=\bigcap_{i\in I}P_i$ has positive volume, there exist $s\ls \alpha n$ and
$i_1,\ldots , i_s\in I$ such that
\begin{equation}|P_{i_1}\cap\cdots\cap P_{i_s}|\ls (cn)^n\,|P|,\end{equation}
where $c>0$ is an absolute constant.
\end{theorem}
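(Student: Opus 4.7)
Since the statement is invariant under non-singular affine maps, I would first normalize $P$ so that its John ellipsoid (the maximum-volume inscribed ellipsoid) is $B_2^n$. Then $B_2^n\subseteq P\subseteq nB_2^n$, and in particular $|P|\geq |B_2^n|$, so it suffices to produce $s=O(n)$ indices $i_1,\ldots,i_s$ whose intersection $Q:=P_{i_1}\cap\cdots\cap P_{i_s}$ is contained in $Cn\cdot B_2^n$ for some absolute constant $C$; this immediately yields $|Q|\leq (Cn)^n|B_2^n|\leq (Cn)^n|P|$. John's theorem additionally supplies contact points $u_1,\ldots,u_N\in S^{n-1}\cap\partial P$ and positive weights $c_j$ satisfying
\[
I_n=\sum_{j=1}^N c_j\,u_j u_j^{\top},\qquad \sum_{j=1}^N c_j u_j=0,\qquad \sum_{j=1}^N c_j=n.
\]

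A priori $N$ can be of order $n^2$, so to reach $s=O(n)$ I would apply a Batson--Spielman--Srivastava-type sparsification (in a centroid-preserving variant) to extract a sub-collection $u_{j_1},\ldots,u_{j_s}$ of size $s=O(n)$ with new weights $\tilde c_k>0$ still obeying the approximate identities $\sum_k \tilde c_k u_{j_k} u_{j_k}^{\top}\succeq I_n$, $\sum_k \tilde c_k u_{j_k}=0$, and $\sum_k \tilde c_k=O(n)$. This sparsification step is what drives the improvement over the previously known $(cn)^{2n}$ bound, which essentially uses the full collection of $O(n^2)$ contact points and pays an extra factor $n^n$.

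For each $k$, since $u_{j_k}\in\partial P$, the point $(1+\eta)u_{j_k}$ lies outside $P=\bigcap_{i\in I}P_i$ for every $\eta>0$, hence outside some $P_{i_k}$, and a separation argument provides a half-space containing $P_{i_k}$ but not $(1+\eta)u_{j_k}$. The main technical obstacle I foresee is to arrange that this half-space is of the form $\{x:\langle x,u_{j_k}\rangle\leq 1+\eta\}$, i.e.\ that it has outward normal exactly $u_{j_k}$; in general the separating hyperplane need only have \emph{some} normal $v_k$. This should be handled by exploiting the fact that $u_{j_k}$ is an extremal supporting direction of $P$ (being a John contact direction), together with an approximation/compactness argument on the family $\{P_i\}_{i\in I}$ (or by first replacing each $P_i$ by a suitable polyhedral inner approximation). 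Granted this, for $Q=\bigcap_k P_{i_k}$, every $x\in Q$ satisfies $\langle x,u_{j_k}\rangle\leq 1+\eta$ for all $k$.

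The last step is to convert these inequalities into a Euclidean bound. Plugging them into the John identity,
\[
|x|^2=x^{\top}I_n\,x\leq \sum_k \tilde c_k\,\langle x,u_{j_k}\rangle^2,
\]
and splitting the right-hand side by the sign of $s_k:=\langle x,u_{j_k}\rangle$, I get $\sum_{s_k>0}\tilde c_k s_k^2\leq (1+\eta)\sum_{s_k>0}\tilde c_k s_k\leq (1+\eta)^2\sum_k \tilde c_k=O(n)$. The centroid condition $\sum_k\tilde c_k u_{j_k}=0$ gives $\sum_{s_k>0}\tilde c_k s_k=\sum_{s_k<0}\tilde c_k|s_k|=O(n)$, hence $\sum_{s_k<0}\tilde c_k s_k^2\leq |x|\sum_{s_k<0}\tilde c_k|s_k|\leq O(n)\,|x|$, since $|s_k|\leq |x|$. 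Combining, $|x|^2\leq O(n)+O(n)\,|x|$, whence $|x|\leq Cn$. Thus $Q\subseteq Cn\cdot B_2^n$ and $|Q|\leq (Cn)^n|P|$ with $s=O(n)$, which is the theorem.
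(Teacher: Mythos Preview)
This theorem is not proved in the present paper; it is quoted from \cite{Brazitikos-2015}. What the paper does prove here are the inclusion results (Theorems~\ref{th:main-sym} and~\ref{th:main-gen}), and it does so via the \emph{dual} picture: one puts $P^{\circ}=\conv\big(\bigcup_i P_i^{\circ}\big)$ in L\"owner position and selects $O(n)$ contact points of $P^{\circ}$, which, being points of maximal norm in a convex hull, automatically lie in individual $P_i^{\circ}$'s. Your primal approach (John position of $P$, contact points $u_{j_k}$, half-spaces through them) is a legitimate mirror image of this.

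Two comments on the argument itself. First, the ``main technical obstacle'' you flag is not an obstacle. For finite $I$, each $u_{j_k}\in\partial P$ lies on $\partial P_{i_k}$ for some $i_k$; any supporting hyperplane of $P_{i_k}$ at $u_{j_k}$ also supports the inscribed ball $B_2^n$ at $u_{j_k}$, and the tangent hyperplane to $B_2^n$ there is unique with normal $u_{j_k}$. Hence $P_{i_k}\subseteq\{x:\langle x,u_{j_k}\rangle\le 1\}$ exactly, with no approximation needed.

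Second, the genuine gap is your ``centroid-preserving variant'' of BSS. What is actually available is Srivastava's theorem (Theorem~\ref{th:sriv} here): with $|\sigma|=O_\varepsilon(n)$ one gets $\sum_{k}b_k(u_{j_k}+v)=0$ for a drift $v$ with $\|v\|_2^2\le\varepsilon/\sum_k b_k$, hence $\big\|\sum_k b_k u_{j_k}\big\|_2\le\sqrt{\varepsilon\sum_k b_k}\asymp\sqrt{n}$ for fixed $\varepsilon$. Plugging this into your final inequality yields
\[
\tfrac12|x|^2\ \le\ \sum_k b_k s_k^2\ \le\ O(n)+O(n)\,|x|+O(\sqrt{n})\,|x|^2,
\]
and the $O(\sqrt{n})|x|^2$ term swallows the left-hand side; the argument does not close. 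Shrinking $\varepsilon$ to $o(1/n)$ would fix the inequality but blows $|\sigma|$ up beyond $O(n)$. This is precisely the difficulty that forces the $n^{3/2}$ (rather than $n$) in Proposition~\ref{prop:main-gen} and Theorem~\ref{th:main-gen}: there the drift is absorbed by adding $\le n+1$ further contact points via Carath\'eodory and working with the Minkowski functional, at the cost of an extra $\sqrt{n}$. Your route to $|x|\le Cn$ therefore needs an additional idea beyond Srivastava's theorem as stated; without it, the claimed bound $Q\subseteq Cn\,B_2^n$ is not established.
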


Following the terminology of \cite{LHRS-2015}, a Helly-type property is (loosely speaking) a property $\Pi $ for which there exists $m\in {\mathbb N}$ such that
if $\{ P_i:i\in I\}$ is a finite family of objects such that every subfamily with $m$ elements satisfies $\Pi $, then the whole family satisfies $\Pi $. Thus, the
previous results (in particular, Theorem \ref{th:volume}) express the fact that the property that ``an intersection has large volume" is a Helly-type
property for the class of convex sets.

B\'{a}r\'{a}ny, Katchalski and Pach studied the question if the property that ``an intersection has large diameter" is also a Helly-type
property for the class of convex sets. In \cite{BKP-1982} they gave a first quantitative answer to this question:

\smallskip

{\sl Let $\{ P_i:i\in I\}$ be a family of closed convex sets in ${\mathbb R}^n$
such that ${\rm diam}\left (\bigcap_{i\in I}P_i\right )=1$. There exist $s\ls 2n$ and $i_1,\ldots ,i_s\in I$ such that
\begin{equation}{\rm diam}\left (P_{i_1}\cap \cdots \cap P_{i_s}\right )\ls (cn)^{n/2},\end{equation}
where $c>0$ is an absolute constant.}

\smallskip

In the same work the authors conjecture that the bound should be polynomial in $n$; in fact they ask if $(cn)^{n/2}$ can be replaced
by $c\sqrt{n}$. Relaxing the requirement that $s\ls 2n$, exactly as in \cite{Brazitikos-2015}, we provide a positive answer, although we are not able to
achieve a bound of the order of $\sqrt{n}$.

Starting with the symmetric case, our main result is the next theorem.

\begin{theorem}\label{th:main-sym}
Let $\{P_i: i\in I\}$ be a finite family of symmetric convex sets in ${\mathbb R}^n$ with ${\rm int}\left (\bigcap_{i\in I}P_i\right )\neq\emptyset $.
For every $d>1$ there exist $s\ls dn$ and $i_1,\ldots i_s\in I$ such that
\begin{equation}\label{eq:main-sym-1}
P_{i_1}\cap\cdots\cap P_{i_s}\subseteq \gamma_d\sqrt{n}\left(\bigcap_{i\in I}P_i\right),
\end{equation}
where $\gamma_d:=\frac{\sqrt{d}+1}{\sqrt{d}-1}$.
\end{theorem}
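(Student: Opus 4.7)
The plan is to combine John's theorem for symmetric convex bodies with a sparsification of the John decomposition of the identity à la Batson--Spielman--Srivastava. After an affine transformation I may assume that the Euclidean ball $B_2^n$ is the L\"owner--John ellipsoid of maximal volume inscribed in $P:=\bigcap_{i\in I}P_i$. Since $P$ is symmetric, this yields $B_2^n\subseteq P\subseteq \sqrt{n}\,B_2^n$ together with a John decomposition: there exist contact points $u_1,\ldots ,u_m\in S^{n-1}\cap\partial P$ and positive weights $c_1,\ldots ,c_m$ with $\sum_k c_k\, u_k u_k^\top =I_n$, so in particular $\sum_k c_k = n$.

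For each contact point $u_k$ one has $h_P(u_k)=1=\min_{i\in I}h_{P_i}(u_k)$, and since $I$ is finite there is some $i_k\in I$ with $h_{P_{i_k}}(u_k)=1$. If one were allowed to use \emph{all} of the contact bodies $\{P_{i_k}\}_{k\ls m}$, the standard Cauchy--Schwarz argument against the John decomposition,
\[
|x|^2=\sum_k c_k\langle x,u_k\rangle^2\ls \sum_k c_k=n \qquad\text{for all } x\in \bigcap_k P_{i_k},
\]
would give $\bigcap_k P_{i_k}\subseteq \sqrt n\,B_2^n\subseteq \sqrt n\,P$. Unfortunately $m$ can be of order $n^2$, well above the allowed budget $s\ls dn$.

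The way out is to invoke the Batson--Spielman--Srivastava sparsification theorem on the John decomposition: for every $d>1$ one can find a subset $J\subseteq\{1,\ldots ,m\}$ of size $|J|\ls\lceil dn\rceil$ and positive weights $c_k'$ ($k\in J$) such that, after a harmless rescaling,
\[
\frac{1}{\gamma_d^{\,2}}\,I_n \;\preceq\; \sum_{k\in J} c_k'\,u_k u_k^\top \;\preceq\; I_n.
\]
Repeating the Cauchy--Schwarz computation for $K:=\bigcap_{k\in J}P_{i_k}$, and using that $\langle x,u_k\rangle^2\ls h_{P_{i_k}}(u_k)^2=1$ for every $x\in K$ and $k\in J$, one obtains
\[
\tfrac{1}{\gamma_d^{\,2}}|x|^2 \;\ls\; \sum_{k\in J}c_k'\langle x,u_k\rangle^2 \;\ls\; \sum_{k\in J} c_k' \;=\; \operatorname{tr}\Bigl(\sum_{k\in J}c_k' u_k u_k^\top\Bigr)\;\ls\; n.
\]
Hence $K\subseteq \gamma_d\sqrt n\,B_2^n\subseteq \gamma_d\sqrt n\,P$, which is the required inclusion with $s\ls dn$ indices.

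The only non-elementary input is the BSS sparsification theorem; everything else is routine L\"owner--John machinery. The main point that must be checked carefully is that the spectral bounds provided by BSS produce exactly the condition number $\gamma_d^{\,2}=\bigl((\sqrt d+1)/(\sqrt d-1)\bigr)^{2}$ with the normalisation adopted above, so that the constant in the inclusion is the sharp value $\gamma_d$ claimed in the statement rather than a larger absolute multiple of it.
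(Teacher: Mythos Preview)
Your argument is correct and is essentially the paper's proof transposed to the primal side: the paper applies the Batson--Spielman--Srivastava sparsification to the L\"owner decomposition of $P^{\circ}=\mathrm{conv}\bigl(\bigcup_i P_i^{\circ}\bigr)$ (packaged as Barvinok's lemma) and then polarizes, whereas you apply it directly to the John decomposition of $P$; the two computations are dual to one another and yield the same constant $\gamma_d\sqrt{n}$.

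One point deserves a cleaner justification: the identity $h_P=\min_{i\in I}h_{P_i}$ is \emph{false} in general for intersections, so it cannot be invoked as you do. What is true here is that each contact point $u_k\in S^{n-1}\cap\partial P$ lies on $\partial P_{i_k}$ for some $i_k$ (since $I$ is finite and $u_k\notin\mathrm{int}\,P=\bigcap_i\mathrm{int}\,P_i$), and because $B_2^n\subseteq P_{i_k}$ the supporting hyperplane of $P_{i_k}$ at $u_k$ must coincide with that of $B_2^n$, forcing $h_{P_{i_k}}(u_k)=1$; this is exactly the dual of the paper's remark that the contact points of $P^{\circ}$ with its L\"owner ellipsoid are extreme points of $P^{\circ}$ and hence lie in $\bigcup_i P_i^{\circ}$.
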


The proof of Theorem \ref{th:main-sym} is presented in Section 3; it is based on a lemma of Barvinok from \cite{Barvinok-2014} which, in turn, exploits a theorem of Batson, Spielman and Srivastava from \cite{BSS-2009}. It is clear that the $\sqrt{n}$-dependence cannot be improved (we provide a simple example).

In the general (not necessarily symmetric) case, using a similar strategy and ideas that were developed in \cite{Brazitikos-2015} and employ
a more delicate theorem of Srivastava from \cite{Srivastava-2012}, we obtain the next estimate.

\begin{theorem}\label{th:main-gen}
There exists an absolute constant $\alpha >1$ with the following
property: if $\{P_i: i\in I\}$ is a finite family of convex bodies in ${\mathbb R}^n$ with ${\rm int}\left (\bigcap_{i\in I}P_i\right )\neq\emptyset $,
then there exist $z\in {\mathbb R}^n$, $s\ls \alpha n$ and $i_1,\ldots i_s\in I$ such that
\begin{equation}\label{eq:main-gen-intro}
z+P_{i_1}\cap\cdots\cap P_{i_s}\subseteq cn^{3/2}\left(z+\bigcap_{i\in I}P_i\right),
\end{equation}
where $c>0$ is an absolute constant.
\end{theorem}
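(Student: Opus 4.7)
The plan is to follow the general strategy of Theorem~\ref{th:main-sym}, while accounting for two additional difficulties caused by the absence of symmetry: (a) John's theorem for a general convex body only gives the weaker inclusion $B_2^n\subseteq P-z_0\subseteq nB_2^n$ (an extra factor of $\sqrt{n}$ compared with the symmetric case), and (b) John's decomposition of the identity now carries an extra affine (centering) constraint $\sum_j c_ju_j=0$ that must be preserved under sparsification. Because of (b), one cannot simply invoke Barvinok's lemma, which rests on the symmetric Batson--Spielman--Srivastava result; instead the more delicate sparsification theorem of Srivastava~\cite{Srivastava-2012} is needed, in the spirit of the volume argument of~\cite{Brazitikos-2015}.

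After an affine normalization, take $-z$ to be the center of the maximal-volume ellipsoid of $P:=\bigcap_{i\in I}P_i$, so that $B_2^n\subseteq z+P\subseteq nB_2^n$. The non-symmetric John theorem then furnishes contact points $u_1,\ldots,u_N\in\partial B_2^n\cap\partial(z+P)$ and positive weights $c_j$ satisfying
\begin{equation*}
\sum_{j=1}^{N}c_j\,u_j\otimes u_j=I_n,\qquad \sum_{j=1}^{N}c_j\,u_j=0.
\end{equation*}
For each $j$ choose $i_j\in I$ so that $\{x:\langle x,u_j\rangle=1\}$ is a supporting hyperplane of $z+P_{i_j}$ at $u_j$; in particular $z+P_{i_j}\subseteq\{x:\langle x,u_j\rangle\leqslant 1\}$. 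Applying Srivastava's theorem to the data $(c_j,u_j)_j$ produces a subset $J$ with $|J|\leqslant\alpha n$ and nonnegative weights $(s_j)_{j\in J}$ satisfying
\begin{equation*}
\sum_{j\in J}s_j\,u_j\otimes u_j\succeq \lambda_1 I_n,\qquad \Bigl\|\sum_{j\in J}s_j\,u_j\Bigr\|\leqslant \delta,\qquad \sum_{j\in J}s_j\leqslant Cn,
\end{equation*}
for absolute constants $\alpha,\lambda_1,C,\delta$.

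Setting $Q:=\bigcap_{j\in J}P_{i_j}\supseteq P$, each $y\in z+Q$ satisfies $\langle y,u_j\rangle\leqslant 1$ for every $j\in J$. The sparsified quadratic identity yields $\lambda_1\|y\|^2\leqslant\sum_{j\in J}s_j\langle y,u_j\rangle^2$; splitting into positive and negative parts, the positive side is controlled by $\sum_{j\in J}s_j\leqslant Cn$, while the negative side is handled by combining the approximate centering $\sum_{j\in J}s_j u_j\approx 0$ with the outer John inclusion $z+P\subseteq nB_2^n$ through a bootstrap analogous to the one used in~\cite{Brazitikos-2015}. One thereby obtains $\|y\|\leqslant c\,n^{3/2}$; since $B_2^n\subseteq z+P$, this gives $z+Q\subseteq c\,n^{3/2}(z+P)$, as required.

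The main obstacle is precisely this last diameter estimate. In the symmetric case both $\pm u_j$ are contact points, so $|\langle y,u_j\rangle|\leqslant 1$ is immediate and yields $\|y\|\lesssim\sqrt{n}$ directly. In the non-symmetric case only the one-sided control $\langle y,u_j\rangle\leqslant 1$ is available, and the extra factor $\sqrt{n}$ that produces the $n^{3/2}$ bound arises from having to control the negative parts $\langle y,u_j\rangle_-$ through the approximate centering condition and the outer bound $nB_2^n$. Srivastava's theorem is indispensable here, since it is the one that preserves the affine constraint throughout the sparsification step; the cruder Batson--Spielman--Srivastava result would destroy the centering and leave no way to bound the negative parts.
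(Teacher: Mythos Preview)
Your primal approach (putting $z+P$ in John position) is dual to the paper's (putting $P^{\circ}$ in L\"owner position) and is viable in principle, but the sketch has a real gap at the ``negative side'' step.

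First, the claim that $\bigl\|\sum_{j\in J}s_j u_j\bigr\|\ls\delta$ for an \emph{absolute} constant $\delta$ is incorrect. Srivastava's theorem yields $\sum_{j\in J}s_j u_j = -\bigl(\sum_{j}s_j\bigr)v$ with $\|v\|^2\ls\varepsilon/\sum_j s_j$; since $\sum_j s_j$ is of order $n$, one only gets $\bigl\|\sum_j s_j u_j\bigr\|=O(\sqrt{n})$. With this value of $\delta$ the quadratic inequality coming from your splitting,
\[
\lambda_1\|y\|^2\ \ls\ Cn\ +\ Cn\,\|y\|\ +\ \delta\,\|y\|^2,
\]
does not close: the right-hand side carries a $\|y\|^2$ term with coefficient $O(\sqrt{n})\gg\lambda_1$, so no bound on $\|y\|$ follows.

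The missing ingredient, which the paper supplies explicitly, is to enlarge the Srivastava index set $J$ by at most $n+1$ further contact points chosen via Carath\'eodory. Since $\|v\|\ls\sqrt{\varepsilon/n}$, the vector $w:=v/\sqrt{\varepsilon n}$ satisfies $\|w\|\ls 1/n$; the John inclusion (equivalently, ${\rm conv}\{u_j:\text{all contact points}\}\supseteq\frac{1}{n}B_2^n$) places $w$ inside the full contact polytope, and Carath\'eodory gives $\tau$ with $|\tau|\ls n+1$ and $w\in{\rm conv}\{u_j:j\in\tau\}$. Adding the bodies $P_{i_j}$, $j\in\tau$, to the intersection yields the one-sided bound $\langle y,w\rangle\ls 1$, hence $\langle y,v\rangle\ls\sqrt{\varepsilon n}$, for every $y$ in the enlarged $z+Q$. \emph{This} is what caps the negative contribution at $O(n^{3/2})$ and lets the inequality resolve to $\|y\|\ls cn^{3/2}$. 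Your phrase ``outer John inclusion $z+P\subseteq nB_2^n$'' is indeed the right fact, but it is used to locate $w$ in the contact polytope---not to bound $y$ directly---and the Carath\'eodory enlargement of the selected index set must actually be performed, since without those extra half-spaces the polyhedron $\bigcap_{j\in J}\{x:\langle x,u_j\rangle\ls 1\}$ need not be contained in $cn^{3/2}B_2^n$.
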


It is clear that Theorem \ref{th:main-sym} and Theorem \ref{th:main-gen} imply polynomial estimates for the diameter:

\begin{theorem}\label{th:diameter}{\rm (a)} Let $\{P_i: i\in I\}$ be a finite family of symmetric convex sets in ${\mathbb R}^n$ with ${\rm diam}\left (\bigcap_{i\in I}P_i\right )=1$.
For every $d>1$ there exist $s\ls dn$ and $i_1,\ldots i_s\in I$ such that
\begin{equation}\label{eq:diameter-1}
{\rm diam}(P_{i_1}\cap\cdots\cap P_{i_s})\ls \gamma_d\sqrt{n},
\end{equation}
where $\gamma_d:=\frac{\sqrt{d}+1}{\sqrt{d}-1}$.

\smallskip

\noindent {\rm (b)} There exists an absolute constant $\alpha >1$ with the following
property: if $\{P_i: i\in I\}$ is a finite family of convex bodies in ${\mathbb R}^n$ with ${\rm diam}\left (\bigcap_{i\in I}P_i\right )=1$,
then there exist $s\ls \alpha n$ and $i_1,\ldots i_s\in I$ such that
\begin{equation}\label{eq:diameter-2}
{\rm diam}(P_{i_1}\cap\cdots\cap P_{i_s})\ls cn^{3/2},
\end{equation}
where $c>0$ is an absolute constant.
\end{theorem}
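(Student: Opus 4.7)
The aim is to read Theorem~\ref{th:diameter} directly off the containment statements in Theorems~\ref{th:main-sym} and~\ref{th:main-gen} by taking the diameter on both sides. Write $K:=\bigcap_{i\in I}P_i$, so $\mathrm{diam}(K)=1$ by hypothesis.

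For part~(a), Theorem~\ref{th:main-sym} applied to the symmetric family $\{P_i\}_{i\in I}$ produces $s\ls dn$ and indices $i_1,\dots,i_s\in I$ with
\[
P_{i_1}\cap\cdots\cap P_{i_s}\subseteq \gamma_d\sqrt{n}\,K.
\]
The diameter functional is monotone under inclusion and positively $1$-homogeneous, so this containment immediately gives
\[
\mathrm{diam}(P_{i_1}\cap\cdots\cap P_{i_s})\ls \gamma_d\sqrt{n}\cdot\mathrm{diam}(K)=\gamma_d\sqrt{n},
\]
which is \eqref{eq:diameter-1}. For part~(b), Theorem~\ref{th:main-gen} produces a vector $z\in\mathbb{R}^n$, a number $s\ls\alpha n$, and indices with $z+P_{i_1}\cap\cdots\cap P_{i_s}\subseteq cn^{3/2}(z+K)$. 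Since the diameter is translation invariant, taking diameters on both sides yields $\mathrm{diam}(P_{i_1}\cap\cdots\cap P_{i_s})=\mathrm{diam}(z+P_{i_1}\cap\cdots\cap P_{i_s})\ls cn^{3/2}\,\mathrm{diam}(z+K)=cn^{3/2}$, which is \eqref{eq:diameter-2}.

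The one point that requires care is that Theorems~\ref{th:main-sym} and~\ref{th:main-gen} are stated under the assumption $\mathrm{int}(K)\neq\emptyset$, whereas Theorem~\ref{th:diameter} only assumes $\mathrm{diam}(K)=1$. To handle the degenerate case I would pass to the affine hull $E=\mathrm{aff}(K)$, which is a linear subspace in the symmetric setting of~(a) and an affine subspace in~(b), of some dimension $k\ls n$. Inside $E\cong\mathbb{R}^k$ the restricted family $\{P_i\cap E\}_{i\in I}$ has intersection $K$ with nonempty relative interior, so Theorems~\ref{th:main-sym} and~\ref{th:main-gen} apply there and yield the same diameter bound, with $\gamma_d\sqrt{k}\ls\gamma_d\sqrt{n}$ and $ck^{3/2}\ls cn^{3/2}$. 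I expect this reduction to be the only substantive obstacle, because the sub-intersection in $\mathbb{R}^n$ need not coincide with its restriction to $E$, and so one must adjoin $O(n)$ further indices from $I$ (supplied by a classical Helly-type argument for boundedness) that confine the sub-intersection near $E$ and transfer the diameter bound from $E$ to the ambient space.
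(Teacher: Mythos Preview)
Your argument is exactly the paper's: Theorem~\ref{th:diameter} is presented there as an immediate consequence of Theorems~\ref{th:main-sym} and~\ref{th:main-gen} with no separate proof, and your deduction via monotonicity, homogeneity, and translation invariance of the diameter is precisely the intended one-line reasoning. Your extra care about the degenerate case $\mathrm{int}(K)=\emptyset$ actually goes beyond what the paper addresses; the affine-hull-plus-extra-indices route you sketch is workable, though a lighter alternative is to perturb each $P_i$ to $P_i+\varepsilon B_2^n$ (which forces nonempty interior while keeping the intersection bounded---its recession cone is still $\{0\}$---with diameter tending to $1$), apply the containment theorems to the perturbed family, and let $\varepsilon\to 0$ along a subsequence on which the finitely many selected indices stabilize.
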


\section{Notation and background}

We work in ${\mathbb R}^n$, which is equipped with a Euclidean structure $\langle\cdot ,\cdot\rangle $. We denote by $\|\cdot \|_2$
the corresponding Euclidean norm, and write $B_2^n$ for the Euclidean unit ball and $S^{n-1}$ for the unit sphere.
Volume is denoted by $|\cdot |$. We write $\omega_n$ for the volume of $B_2^n$ and $\sigma $ for the rotationally invariant probability
measure on $S^{n-1}$. We will denote by $P_F$ the orthogonal projection from $\mathbb R^{n}$ onto $F$. We also define
$B_F=B_2^n\cap F$ and $S_F=S^{n-1}\cap F$.

The letters $c,c^{\prime }, c_1, c_2$ etc. denote absolute positive constants which may change from line to line. Whenever we write
$a\simeq b$, we mean that there exist absolute constants $c_1,c_2>0$ such that $c_1a\ls b\ls c_2a$.  Also, if $K,L\subseteq \mathbb R^n$
we will write $K\simeq L$ if there exist absolute constants $c_1,c_2>0$ such that $ c_{1}K\subseteq L \subseteq c_{2}K$.

We refer to the book of Schneider \cite{Schneider-book} for basic facts from the Brunn-Minkowski theory and to the book
of Artstein-Avidan, Giannopoulos and V. Milman \cite{AGA-book} for basic facts from asymptotic convex geometry.

\smallskip

A convex body in ${\mathbb R}^n$ is a compact convex subset $K$ of ${\mathbb R}^n$ with non-empty interior. We say that $K$ is
symmetric if $x\in K$ implies that $-x\in K$, and that $K$ is centered if its barycenter
\begin{equation}{\rm bar}(K)=\frac{1}{|K|}\int_Kx\,dx \end{equation} is at the origin. The circumradius of $K$ is the radius of the
smallest ball which is centered at the origin and contains $K$:
\begin{equation}R(K)=\max\{ \|x\|_2:x\in K\}.\end{equation}
If $0\in {\rm int}(K)$ then the polar body $K^{\circ }$ of $K$ is defined by
\begin{equation}K^{\circ }:=\{ y\in {\mathbb R}^n: \langle x,y\rangle \ls 1
\;\hbox{for all}\; x\in K\}. \end{equation}
and the Minkowski functional of $K$ is defined by
\begin{equation}p_K(x)=\min\{ t\gr 0: x\in tK\}.\end{equation}
Recall that $p_K$ is subadditive and positively homogeneous.

\smallskip

We say that a convex body $K$ is in John's position if the ellipsoid of maximal volume inscribed in $K$ is
the Euclidean unit ball $B_2^n$. John's theorem \cite{John-1948} states that
$K$ is in John's position if and only if $B_2^n\subseteq K$ and there exist $v_1,\ldots ,v_m\in {\rm
bd}(K)\cap S^{n-1}$ (contact points of $K$ and $B_2^n$) and positive real numbers $a_1,\ldots ,a_m$ such that
\begin{equation}\label{eq:bar-0}\sum_{j=1}^ma_jv_j=0\end{equation}
and the identity operator $I_n$ is decomposed in the form
\begin{equation}\label{eq:decomposition}I_n=\sum_{j=1}^ma_jv_j\otimes v_j, \end{equation}
where $(v_j\otimes v_j)(y)=\langle v_j,y\rangle v_j$. In the case where $K$ is symmetric, the second
condition \eqref{eq:decomposition} is enough (for any contact point $u$ we have that $-u$ is also
a contact point, and hence, having \eqref{eq:decomposition} we may easily produce a decomposition for which
\eqref{eq:bar-0} is also satisfied). In analogy to John's position, we say that a convex body $K$ is in L\"{o}wner's
position if the ellipsoid of minimal volume containing $K$ is the Euclidean unit ball
$B_2^n$. One can check that this holds true if and only if $K^{\circ }$ is in John position;
 in particular, we have a decomposition of the identity similar to
\eqref{eq:decomposition}.

Assume that $v_1,\ldots ,v_m$ are unit vectors that satisfy John's decomposition \eqref{eq:decomposition} with some
positive weights $a_j$. Then, one has the useful identities
\begin{equation}\label{eq:trace}\sum_{j=1}^ma_j={\rm tr}(I_n)=n\quad \hbox{and}\quad
\sum_{j=1}^ma_j\langle v_j,z\rangle^2=1 \end{equation}
for all $z\in S^{n-1}$. Moreover,
\begin{equation}\label{eq:ball-inside}{\rm conv}\{v_1,\ldots ,v_m\}\supseteq\frac{1}{n}B_2^n.\end{equation}
In the symmetric case we actually have
\begin{equation}\label{eq:ball-inside-sym}{\rm conv}\{\pm v_1,\ldots ,\pm v_m\}\supseteq\frac{1}{\sqrt{n}}B_2^n.\end{equation}

\section{Symmetric case}

Our main tool for the symmetric case is a lemma of Barvinok from \cite{Barvinok-2014}, which exploits the next
theorem of Batson, Spielman and Srivastava \cite{BSS-2009} on extracting an approximate John's decomposition with
few vectors from a John's decomposition of the identity.

\begin{theorem}[Batson-Spielman-Srivastava]\label{th:BSS}Let $v_1,\ldots ,v_m \in S^{n-1}$ and $a_1,\ldots ,a_m>0$ such that
\begin{equation}\label{eq:BSS-1}I_n=\sum_{j=1}^{m}a_jv_j\otimes v_j.\end{equation}
Then, for every $d>1$ there exists a subset $\sigma\subseteq \{ 1,\ldots ,m\}$ with $|\sigma |\ls dn$ and $b_j>0$, $j\in \sigma $, such that
\begin{equation}\label{eq:BSS-2}I_n\preceq \sum_{j\in\sigma }b_ja_jv_j\otimes v_j \preceq \gamma_d^2I_n,\end{equation}
where $\gamma_d:=\frac{\sqrt{d}+1}{\sqrt{d}-1}$.
\end{theorem}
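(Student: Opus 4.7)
The plan is to build the output matrix $A = \sum_{j\in\sigma} b_j a_j\, v_j\otimes v_j$ greedily, one rank-one term at a time, while controlling the spectrum by means of two scalar potentials (``barrier functions'') that track how close the current eigenvalues are to a moving upper barrier $u$ and a moving lower barrier $\ell$. Specifically, set
\begin{equation*}
\Phi^u(A,u)=\mathrm{tr}\bigl((uI_n-A)^{-1}\bigr),\qquad \Phi_{\ell}(A,\ell)=\mathrm{tr}\bigl((A-\ell I_n)^{-1}\bigr),
\end{equation*}
which blow up precisely when an eigenvalue of $A$ reaches $u$ from below, or $\ell$ from above. Initialise $A_0=0$, $u_0=\delta_U$, $\ell_0=-\delta_L$ with constants $\delta_U,\delta_L>0$ chosen so that $\Phi^{u_0}(A_0)$ and $\Phi_{\ell_0}(A_0)$ equal prescribed targets $\varepsilon_U$ and $\varepsilon_L$. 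The goal is, at each of $s\ls dn$ steps, to choose an index $j$ and a weight $t>0$ and shift the barriers by fixed amounts $\delta_+,\delta_-$ so that the two potentials do not increase.

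Using the Sherman--Morrison identity, adding $t\,v_j\otimes v_j$ and shifting $u\to u+\delta_+$ changes $\Phi^u$ by an explicit rational expression in the quadratic forms $\langle (u'I-A)^{-1}v_j,v_j\rangle$ and $\langle (u'I-A)^{-2}v_j,v_j\rangle$; analogously for $\Phi_{\ell}$. Rearranging, one gets a scalar condition of the form ``there exists $t>0$ such that $U_j(t)\le 0$ and $L_j(t)\le 0$'', where $U_j,L_j$ depend on $v_j$ and the current $A$. The crucial step is to average these quantities over the John distribution: the identity $\sum_j a_j v_j\otimes v_j=I_n$ yields
\begin{equation*}
\sum_j a_j\langle M v_j,v_j\rangle =\mathrm{tr}(M)
\end{equation*}
for every symmetric $M$, which lets us upper-bound the weighted sums $\sum_j a_j U_j(t)$ and $\sum_j a_j L_j(t)$ by quantities depending only on $\Phi^u$, $\Phi_\ell$ and the shifts $\delta_\pm$. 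Choosing $\delta_+,\delta_-,\varepsilon_U,\varepsilon_L$ so that both averages are non-positive at a common $t>0$ then forces the existence of at least one index $j$ for which the required $(j,t)$ works. This is the main obstacle: the parameters must be tuned so that the two competing averaged inequalities admit a simultaneous solution, and so that the per-step barrier shifts accumulate to the right ratio after $dn$ steps.

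After $s=dn$ iterations, the upper barrier has moved to $u_s=\delta_U+s\delta_+$ and the lower barrier to $\ell_s=-\delta_L+s\delta_-$; since the potentials remain bounded, we have $\lambda_{\max}(A_s)\ls u_s$ and $\lambda_{\min}(A_s)\gr \ell_s$, hence
\begin{equation*}
\ell_s\,I_n\preceq A_s\preceq u_s\,I_n.
\end{equation*}
A direct optimisation shows that with the optimal choice of $\delta_U,\delta_L,\delta_\pm$ (depending on $d$) one obtains $u_s/\ell_s=\gamma_d^2$, where $\gamma_d=\frac{\sqrt{d}+1}{\sqrt{d}-1}$. Rescaling $A_s$ by $\ell_s$ and writing $b_j=t_j/(\ell_s a_j)$ for the indices $j\in\sigma$ selected during the process yields the required sandwich $I_n\preceq \sum_{j\in\sigma} b_j a_j\, v_j\otimes v_j\preceq \gamma_d^2\, I_n$ with $|\sigma|\ls dn$.

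Routine verifications that I would defer: computing the exact rational form of the Sherman--Morrison updates for both potentials, checking positivity of the auxiliary denominators (so that $t>0$ is legitimate), and the one-variable calculus at the end that locates the optimal $\delta_\pm$ giving the sharp constant $\gamma_d^2$. The conceptual content lives entirely in the averaging step that converts a spectral selection problem into a scalar potential inequality via the John identity.
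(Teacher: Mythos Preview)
The paper does not prove this theorem at all: Theorem~\ref{th:BSS} is quoted from \cite{BSS-2009} and used as a black box to establish Barvinok's lemma, so there is no ``paper's own proof'' to compare against. Your outline is a faithful sketch of the original Batson--Spielman--Srivastava barrier-potential argument (the two traces $\Phi^u,\Phi_\ell$, the Sherman--Morrison update, the averaging via $\sum_j a_j\langle Mv_j,v_j\rangle=\mathrm{tr}(M)$, and the final parameter optimisation giving $\gamma_d^2$), and as such it is correct in spirit and matches the source the paper cites.
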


Here, given two symmetric positive definite matrices $A$ and $B$ we write $A\preceq B$ if $\langle Ax,x\rangle \ls \langle Bx,x\rangle $
for all $x\in {\mathbb R}^n$. Barvinok's lemma is the next statement.

\begin{lemma}[Barvinok]\label{lem:barvinok}
Let $C\subset\mathbb{R}^n$ be a compact set. Then, there exists a subset $X\subseteq C$ of
cardinality ${\rm card}(X)\ls dn$ such that for any $z\in {\mathbb R}^n$ we have
\begin{equation}\label{eq:barvinok}
\max_{x\in X}|\langle z,x\rangle |\ls\max_{x\in C}|\langle z,x\rangle|\ls \gamma_d\sqrt{n}\max_{x\in X}|\langle z,x\rangle|
\end{equation}
\end{lemma}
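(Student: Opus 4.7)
The plan is to attach to $C$ a symmetric convex body, put that body in L\"owner's position so that a John-type decomposition of the identity becomes available, and then apply the Batson-Spielman-Srivastava theorem to thin the decomposition to $dn$ contact points that still capture the middle term of \eqref{eq:barvinok} up to a factor $\gamma_d\sqrt{n}$.

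I would begin by setting $K=\mathrm{conv}(C\cup -C)$ and observing
\[\max_{x\in C}|\langle z,x\rangle|=\max_{x\in K}\langle z,x\rangle=h_K(z).\]
Without loss of generality $K$ has nonempty interior (otherwise we work inside $\mathrm{span}(K)$, where the resulting bound is only stronger). After a linear transformation we may further assume that $K$ is in L\"owner's position, so that $K\subseteq B_2^n$ with contact points $v_1,\ldots ,v_m\in K\cap S^{n-1}$ and weights $a_j>0$ satisfying $I_n=\sum_{j=1}^ma_jv_j\otimes v_j$.

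Because each $v_j$ lies on $S^{n-1}$ and $K\subseteq B_2^n$, every $v_j$ is an extreme point of $B_2^n$ that belongs to $K$, and so is an extreme point of $K$; by Milman's theorem the extreme points of $\mathrm{conv}(C\cup -C)$ belong to $C\cup -C$, so we may pick $x_j\in C$ with $v_j=\pm x_j$. Applying Theorem \ref{th:BSS} to the decomposition above produces $\sigma\subseteq\{1,\ldots ,m\}$ with $|\sigma|\ls dn$ and weights $b_j>0$ such that
\[I_n\preceq\sum_{j\in\sigma}b_ja_jv_j\otimes v_j\preceq\gamma_d^2I_n;\]
taking traces in the upper bound yields $\sum_{j\in\sigma}b_ja_j\ls\gamma_d^2n$. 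Setting $X=\{x_j:j\in\sigma\}$ gives $|X|\ls dn$, and the left inequality in \eqref{eq:barvinok} is automatic. For the right one, evaluating the lower half of the BSS sandwich at $z\in\mathbb{R}^n$ gives
\[\|z\|_2^2\ls\sum_{j\in\sigma}b_ja_j\langle v_j,z\rangle^2\ls\Big(\sum_{j\in\sigma}b_ja_j\Big)\max_{x\in X}|\langle x,z\rangle|^2\ls\gamma_d^2n\max_{x\in X}|\langle x,z\rangle|^2,\]
where I use $|\langle v_j,z\rangle|=|\langle x_j,z\rangle|$. Combining with $h_K(z)\ls\|z\|_2$ (from $K\subseteq B_2^n$) and taking square roots proves \eqref{eq:barvinok}; the preliminary linear transformation does not affect the conclusion, since both sides of \eqref{eq:barvinok} transform identically under the substitution $z\mapsto T^{-\ast}z$, $x\mapsto Tx$.

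The step I expect to be most delicate is the extreme-point identification $v_j\in C\cup -C$, which is what forces the subset produced by BSS to correspond to an actual subfamily of $C$ rather than to auxiliary vectors; once this is in place, the remainder is a direct combination of L\"owner's position with Theorem \ref{th:BSS}.
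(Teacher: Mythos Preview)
Your argument is correct and is essentially the paper's proof: both put (the symmetric hull of) $C$ in L\"owner's position, apply Theorem~\ref{th:BSS} to the resulting John decomposition, bound $\sum_{j\in\sigma}a_jb_j$ via the trace, and combine $\|z\|_2^2\ls\sum_{j\in\sigma}a_jb_j\langle v_j,z\rangle^2$ with $C\subseteq B_2^n$. The only cosmetic difference is that the paper speaks directly of the origin-symmetric minimal-volume ellipsoid of $C$ and asserts the contact points lie in $C\cap S^{n-1}$, whereas you make this explicit by passing through $K=\mathrm{conv}(C\cup -C)$ and the extreme-point identification.
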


\begin{proof}We sketch the proof for completeness. We may assume that $C$ spans ${\mathbb R}^n$ and, by the linear invariance of
the statement, that $B_2^n$ is the origin symmetric ellipsoid of minimal volume containing $C$.
Then, there exist $v_1,\ldots ,v_m\in C\cap S^{n-1}$ and $a_1,\ldots ,a_m>0$ such that \eqref{eq:BSS-1} is satisfied. Then, applying
Theorem \ref{th:BSS} we may find a subset $\sigma\subseteq \{ 1,\ldots ,m\}$ with ${\rm card}(\sigma )\ls dn$ and $b_j>0$, $j\in \sigma $, such that
\eqref{eq:BSS-2} holds true. In particular,
\begin{equation}\label{eq:bar-1}n\ls \sum_{j\in\sigma }a_jb_j={\rm tr}\left (\sum_{j\in\sigma }b_ja_jv_j\otimes v_j\right )\ls\gamma_d^2n.\end{equation}
Given $z\in {\mathbb R}^n$, from \eqref{eq:BSS-2} and \eqref{eq:bar-1} we have
\begin{equation}\label{eq:bar-2}\|z\|_2^2\ls \sum_{j\in\sigma }b_ja_j\langle z,v_j\rangle^2\ls \gamma_d^2n\,\max_{j\in \sigma }|\langle z,v_j
\rangle |^2,\end{equation}
and using the fact that $C\subseteq B_2^n$ we conclude that
\begin{equation}\label{eq:bar-3}\max_{x\in C}|\langle z,x\rangle|\ls\|z\|\ls\gamma_d\sqrt{n}\,\max_{j\in\sigma }|\langle z,v_j\rangle |.\end{equation}
Setting $X=\{ v_j:j\in\sigma \}$ we conclude the proof.
\end{proof}

\medskip

Using Barvinok's lemma we can prove Theorem \ref{th:main-sym}.

\medskip

\noindent {\bf Proof of Theorem \ref{th:main-sym}.} Let $P=\bigcap_{i\in I}P_i$ and consider its polar body
\begin{equation}\label{eq:main-sym-2}P^{\circ}=\mathrm{conv}\left(\bigcup_{i\in I} P_i^{\circ}\right).\end{equation}
Using Lemma \ref{lem:barvinok} for $C=P^{\circ }$ we may find $X=\{ v_1,\ldots ,v_s\}\subset P^{\circ }$ with ${\rm card}(X)=s\ls dn$ such that
\begin{equation}\label{eq:main-sym-3}\max_{x\in P^{\circ }}|\langle z,x\rangle|\ls \gamma_d\sqrt{n}\,\max_{x\in X}|\langle z,x\rangle|\end{equation}
for all $z\in {\mathbb R}^n$. It follows that
\begin{equation}\label{eq:main-sym-4}P^{\circ}\subseteq\gamma_d\sqrt{n}\,\mathrm{conv}(\{\pm v_1,\ldots ,\pm v_s\}).\end{equation}
From the proof of Lemma \ref{lem:barvinok} we see that $v_1,\ldots ,v_s$ may be chosen to be
contact points of $P^{\circ }$ with its minimal volume ellipsoid, and hence it is simple to check that we actually have
$v_j\in \bigcup_{i\in I}P_i^{\circ }$ for all $j=1,\ldots ,s$. In other words, we may find $i_1,\ldots ,i_s\in I$ such
that $v_j\in P_{i_j}$, $j=1,\ldots ,s$. Then, \eqref{eq:main-sym-4} implies that
\begin{equation}\label{eq:main-sym-5}P^{\circ}\subseteq\gamma_d\sqrt{n}\,\mathrm{conv}(P_{i_1}^{\circ }\cup\cdots \cup P_{i_s}^{\circ }),\end{equation}
and passing to the polar bodies, we get
\begin{equation}\label{eq:main-sym-6}P_{i_1}\cap\cdots\cap P_{i_s}\subseteq\gamma_d\sqrt{n}\,P\end{equation}
as claimed. $\hfill\Box $

\begin{remark}\rm Theorem \ref{th:main-sym} is sharp in the following sense: we can find $w_1,\ldots ,w_N\in S^{n-1}$ (assuming that $N$ is exponential in the dimension $n$) such that
\begin{equation}B_2^n\subseteq \bigcap_{j=1}^NP_j\subseteq 2B_2^n,\end{equation}
where
\begin{equation}P_j=\{ x\in {\mathbb R}^n:|\langle x,w_j\rangle |\ls 1\}.\end{equation}
For any $s\ls dn$ and any choice of $j_1,\ldots ,j_s\in \{ 1,\ldots ,N\}$,
well-known lower bounds for the volume of intersections of strips, due
to Carl-Pajor \cite{Carl-Pajor-1988}, Gluskin \cite{Gluskin-1988} and Ball-Pajor \cite{Ball-Pajor-1990}
show that
\begin{equation}\label{lower}|P_{j_1}\cap\cdots \cap P_{j_s}|^{1/n}\gr \frac{2}{\sqrt{e}\sqrt{\log (1+d)}}.\end{equation}
Therefore, if $P_{j_1}\cap \cdots \cap P_{j_s}\subseteq \alpha\bigcap_{j=1}^NP_j$ for some $\alpha >0$, comparing volumes we see that
\begin{equation}\alpha \gr \frac{|P_{j_1}\cap\cdots \cap P_{j_s}|^{1/n}}{|2B_2^n|^{1/n}}\gr \frac{c}{\sqrt{\log (1+d)}}\sqrt{n},\end{equation}
where $c>0$ is an absolute constant.
\end{remark}

\section{General case}

In order to deal with the not-necessarily symmetric case we use the next theorem of Srivastava from \cite{Srivastava-2012}.

\begin{theorem}[Srivastava]\label{th:sriv}Let $v_1,\ldots ,v_m\in S^{n-1}$ and $a_1,\ldots ,a_m>0$ such that
\begin{equation}I_n=\sum_{j=1}^ma_jv_j\otimes v_j\quad\hbox{and}\quad \sum_{j=1}^ma_jv_j=0.\end{equation}
Given $\varepsilon>0$ we can find a subset $\sigma $ of $\{1,\ldots ,m\}$ of cardinality $|\sigma |=O_{\varepsilon}(n)$,
positive scalars $b_i$, $i\in \sigma $ and a vector $v$ with
\begin{equation}\label{eq:vector-v}\|v\|_2^2\ls\frac{\varepsilon}{\sum_{i\in \sigma } b_i},\end{equation}
such that
\begin{equation}\label{Srivastavalemma2}
I_n\preceq\sum_{i\in \sigma } b_i(v_i+v)\otimes (v_i+v)\preceq (4+\varepsilon)I_n
\end{equation}
and \begin{equation}\label{Srivastavalemma3}\sum_{i\in \sigma } b_i(v_i+v)=0.\end{equation}
\end{theorem}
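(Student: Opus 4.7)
The plan is to reduce Theorem \ref{th:sriv} to the Batson-Spielman-Srivastava sparsification theorem (Theorem \ref{th:BSS}) via a dimension-augmentation trick that encodes the barycentric condition $\sum a_jv_j=0$ as an extra coordinate. The two hypotheses together almost force a single identity-decomposition in one dimension higher.

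First I would lift each $v_j$ to $\tilde v_j=(v_j,t)\in \mathbb R^{n+1}$ for a parameter $t>0$ to be chosen. The block computation
\begin{equation*}
\sum_{j=1}^ma_j\tilde v_j\otimes \tilde v_j=\begin{pmatrix}\sum a_jv_j\otimes v_j & t\sum a_jv_j\\ t\bigl(\sum a_jv_j\bigr)^T & t^2\sum a_j\end{pmatrix}=\begin{pmatrix}I_n & \mathbf 0\\ \mathbf 0^T & t^2n\end{pmatrix}
\end{equation*}
uses both hypotheses simultaneously: the vanishing cross block is the barycentric condition, and the corner scalar comes from $\sum a_j=\mathrm{tr}(I_n)=n$. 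After normalising $\tilde v_j\mapsto \tilde v_j/\|\tilde v_j\|_2$, absorbing the norms into new weights, and conjugating by the diagonal square root that turns the target matrix into $I_{n+1}$, this becomes a genuine John decomposition in $\mathbb R^{n+1}$ to which Theorem \ref{th:BSS} applies.

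Next I would pick $d=d(\varepsilon)$ and apply Theorem \ref{th:BSS} in dimension $n+1$ to obtain a subset $\sigma$ of cardinality $O_\varepsilon(n)$ and positive weights $b_i$. Undoing the rescaling, the resulting two-sided inequality $I_{n+1}\preceq M_\sigma\preceq \gamma_d^2I_{n+1}$ decouples block by block: the top-left $n\times n$ block yields an approximate decomposition $I_n\preceq \sum_{i\in\sigma}b_iv_i\otimes v_i\preceq C_\varepsilon I_n$; the off-diagonal block furnishes a quantitative smallness estimate on the drift $S_\sigma:=\sum_{i\in\sigma}b_iv_i$; and the bottom-right entry pins down $\sum_{i\in\sigma}b_i$ up to a constant factor. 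Setting $v:=-S_\sigma/\sum_{i\in\sigma}b_i$ then enforces \eqref{Srivastavalemma3} by construction, and the algebraic identity
\begin{equation*}
\sum_{i\in\sigma}b_i(v_i+v)\otimes (v_i+v)=\sum_{i\in\sigma}b_iv_i\otimes v_i-\Bigl(\sum_{i\in\sigma}b_i\Bigr)\,v\otimes v,
\end{equation*}
valid precisely for this $v$, shows that the translated decomposition differs from the original by a single rank-one subtraction of operator norm $(\sum_i b_i)\|v\|_2^2$. Converting the drift estimate into $(\sum b_i)\|v\|_2^2\ls\varepsilon$ simultaneously produces \eqref{eq:vector-v} and absorbs into the slack required to reach the upper bound $(4+\varepsilon)I_n$ in \eqref{Srivastavalemma2}.

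The main obstacle is the coupled tuning of the augmentation parameter $t$ and the sparsification parameter $d$. If $t$ is too small, the cross block in the $(n+1)$-dimensional sparsifier is poorly controlled, the drift $S_\sigma$ cannot be made small, and $v$ blows up; if $t$ is too large, the new coordinate dominates and the restriction to the first $n$ coordinates gives weak two-sided spectral bounds in the original space. These must be balanced so that the BSS distortion $\gamma_d^2$ in $\mathbb R^{n+1}$, pulled back through normalisation, restriction to the first $n$ coordinates, and the rank-one translation by $v$, lands on the clean constant $4+\varepsilon$. A secondary worry is that this black-box reduction may leak a factor that prevents hitting the advertised $4$; if so, the more delicate route is to run the BSS barrier argument directly on the augmented problem, tracking the linear functional $\sum b_iv_i$ as an additional barrier constraint rather than recovering it a posteriori.
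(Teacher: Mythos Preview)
The paper does not prove Theorem~\ref{th:sriv}; it is quoted as a black box from Srivastava's paper \cite{Srivastava-2012} and then applied in the proof of Proposition~\ref{prop:main-gen}. So there is no ``paper's own proof'' to compare your proposal against.

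That said, your outline is precisely the strategy of Srivastava's original argument: lift $v_j\mapsto (v_j,t)$ so that the two hypotheses $\sum a_jv_j\otimes v_j=I_n$ and $\sum a_jv_j=0$ merge into a single identity decomposition in $\mathbb R^{n+1}$, sparsify there, and then read off the drift control from the off-diagonal block. Two remarks on your write-up. First, the off-diagonal bound you need does not follow from the crude estimate $|\langle Mu,w\rangle|\ls\gamma_d^2$; that only gives $(\sum b_i)\|v\|_2^2\ls\gamma_d^4$, which is never small. You must instead use the positivity of $M-I_{n+1}$ (e.g.\ via a $2\times2$ principal minor or Schur complement) to get $t^2\|\sum_{i\in\sigma}b_iv_i\|_2^2\ls(\gamma_d^2-1)^2$, and it is $\gamma_d^2-1\to 0$ as $d\to\infty$ that makes $(\sum b_i)\|v\|_2^2$ small. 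Second, once you allow $d\to\infty$ the black-box route actually yields an upper constant $1+\varepsilon$ rather than $4+\varepsilon$, at the cost of a worse (but still $O_\varepsilon(n)$) dependence of $|\sigma|$ on $\varepsilon$; Srivastava's stated constant $4+\varepsilon$ reflects a more explicit trade-off obtained by running the barrier argument directly on the augmented problem, as you correctly anticipate in your last paragraph.
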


\begin{proposition}\label{prop:main-gen}
There exists an absolute constant $\alpha >1$ with the following
property: if $K$ is a convex body whose minimal volume ellipsoid is the Euclidean unit ball,
then there is a subset $X\subset K$ of
cardinality ${\rm card}(X)\ls \alpha n$ such that
\begin{equation}\label{eq:barvinok-gen-1}
B_2^n\subseteq cn^{3/2}{\rm conv}(X),
\end{equation}
where $c>0$ is an absolute constant.
\end{proposition}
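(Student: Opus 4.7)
The plan is to imitate the proof of Barvinok's Lemma~\ref{lem:barvinok}, with Theorem~\ref{th:sriv} (Srivastava) playing the role of Theorem~\ref{th:BSS}. Since $K$ is in L\"owner's position, John's theorem furnishes contact points $v_1,\ldots,v_m\in K\cap S^{n-1}$ and positive weights $a_j$ satisfying $\sum_j a_j v_j\otimes v_j = I_n$ and $\sum_j a_j v_j = 0$. Applying Theorem~\ref{th:sriv} with $\varepsilon = 1$, I extract a subset $\sigma\subseteq\{1,\ldots,m\}$ of cardinality $|\sigma|\ls \alpha n$, positive weights $b_i$ $(i\in\sigma)$, and a shift $v\in \mathbb{R}^n$ with $\|v\|_2^2\ls 1/\sum_{i\in\sigma}b_i$ such that the shifted vectors $w_i := v_i + v$ satisfy $I_n\preceq \sum_{i}b_i w_i\otimes w_i\preceq 5 I_n$ and $\sum_i b_i w_i = 0$.

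Next, following the classical argument that $\conv\{v_j\}_{j=1}^m\supseteq \frac{1}{2n}B_2^n$ -- which uses the frame identity together with the centering condition -- I would show that $\conv\{w_i\colon i\in\sigma\}\supseteq \frac{c_0}{n}B_2^n$ for some absolute $c_0>0$. The key inputs are $\sum b_i\langle z,w_i\rangle = 0$, $\sum b_i\langle z,w_i\rangle^2\gr \|z\|^2$ for every $z\in S^{n-1}$, the pointwise bound $\|w_i\|_2\ls 1+\|v\|_2=O(1)$, and the trace estimate $\sum b_i = \Theta(n)$ extracted from \eqref{Srivastavalemma2}. Translating back by $-v$, this yields $\conv\{v_i\colon i\in\sigma\}\supseteq -v + \frac{c_0}{n}B_2^n$: an inscribed ball of radius $c_0/n$ centered at $-v$, a point whose norm is $\|v\|_2\ls 1/\sqrt{n}$ by \eqref{eq:vector-v}.

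The main obstacle -- and the source of the extra $\sqrt{n}$ factor compared with the symmetric bound of Theorem~\ref{th:main-sym} -- is precisely that this inscribed ball is not centered at the origin. Since $\|v\|_2$ is of strictly larger order than $c_0/n$, the body $cn^{3/2}\cdot\conv\{v_i\colon i\in\sigma\}$ does not immediately cover $B_2^n$. To overcome this, I would follow the strategy of \cite{Brazitikos-2015} and augment $X$ with at most $O(n)$ additional contact points of $K$, chosen to compensate for the direction of $v$; the classical inclusion $\conv\{v_1,\ldots,v_m\}\supseteq \frac{1}{2n}B_2^n$ ensures that such balancing contact points exist. The resulting set $X\subset K$ satisfies $|X|\ls \alpha n$ and $\conv(X)\supseteq \frac{1}{cn^{3/2}}B_2^n$, which is equivalent to \eqref{eq:barvinok-gen-1}.
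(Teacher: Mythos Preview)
Your proposal is correct and follows essentially the same route as the paper: apply Srivastava's theorem to the John decomposition, note $\|v\|_2\lesssim 1/\sqrt{n}$, and augment $\sigma$ by at most $n+1$ further contact points chosen via Carath\'eodory from the inclusion $\conv\{v_j:j\in J\}\supseteq\frac{1}{n}B_2^n$ so that (a rescaling of) $v$ lies in their convex hull. The only cosmetic difference is that the paper, instead of translating the inscribed ball of $\conv\{w_i:i\in\sigma\}$ and arguing geometrically, bounds $p_K(Ax)$ directly for $A=\sum_{j\in\sigma}b_jv_j\otimes v_j$ via a case split on $\delta=\min_{j\in\sigma}\langle x,v_j\rangle$.
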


\begin{proof}As in the proof of Lemma \ref{lem:barvinok} we assume that $B_2^n$ is the minimal volume ellipsoid
of $K$, and we find $v_j\in K\cap S^{n-1}$ and $a_j>0$, $j\in J$, such that
\begin{equation}\label{eq:barvinok-gen-2}I_n=\sum_{j\in J}a_jv_j\otimes v_j\quad\hbox{and}\quad \sum_{j\in J}a_jv_j=0.\end{equation}
We fix $\varepsilon >0$, which will be chosen small enough, and we apply Theorem \ref{th:sriv} to find a subset
$\sigma \subseteq J$ with $|\sigma |\ls\alpha_1(\varepsilon )n$, positive scalars $b_j$, $j\in \sigma $ and a vector $v$ such that
\begin{equation}\label{half11}
I_n\preceq\sum_{j\in \sigma} b_j(v_j+v)\otimes (v_j+v)\preceq (4+\varepsilon )I_n
\end{equation}
and \begin{equation}\label{half12}\sum_{j\in \sigma } b_j(v_j+v)=0\ \ \text{and}\ \ \|v\|_2^2\ls\frac{\varepsilon }{\sum_{j\in \sigma } b_j}.\end{equation}
Note that
\begin{equation}\label{eq:barvinok-gen-3}
{\rm tr}\left(\sum_{j\in \sigma} b_j(v_j+v)\otimes (v_j+v)\right)=\sum_{j\in \sigma }b_j-\left (\sum_{j\in\sigma }b_j\right )\|v\|_2^2
\end{equation}\label{eq:barvinok-gen-4}
and hence from \eqref{half11} we get that
$$n\ls\sum_{j\in \sigma }b_j-\left (\sum_{j\in\sigma }b_j\right )\|v\|_2^2\ls (4+\varepsilon )n.$$
Now, using \eqref{half12} we get
\begin{equation}\label{ineq1}
n\ls\sum_{j\in \sigma}b_j\ls (4+2\varepsilon )n.
\end{equation}
In particular,
\begin{equation}\label{eq:barvinok-gen-5}\|v\|_2^2\ls\frac{\varepsilon }{\sum_{j\in\sigma }b_j}\ls\frac{\varepsilon }{n}.\end{equation}
From John's theorem we know that $\conv\{v_j,j\in J\}\supseteq\frac{1}{n}B_2^n$. Then, for the vector $w=\frac{v}{\sqrt{\varepsilon n}}$ we have $\|w\|_2\ls\frac{1}{n}$ and hence $w\in \conv\{v_j, j\in J\}$. Carath\'{e}odory's theorem shows that there exist $\tau\subseteq J$ with $|\tau |\ls n+1$ and $\rho_i> 0$, $i\in\tau $ such that
\begin{equation}\label{eq:barvinok-gen-6}w=\sum_{i\in\tau }\rho_iv_i\ \ \text{and}\ \ \sum_{i\in\tau }\rho_i=1.\end{equation}
Note that \begin{equation}\label{eq:barvinok-gen-7}\left (\sum_{j\in \sigma}b_j\right )(-v)=\sum_{j\in \sigma }b_jv_j,\end{equation}
and this shows that $-v\in {\rm conv}\{v_j:j\in \sigma\}$.

We write
\begin{equation}\label{eq:6-44}
I_{n}-T\preceq\sum_{j\in \sigma} b_jv_j\otimes v_j\preceq (4+2\varepsilon )I_{n}-T,\end{equation}
where
\begin{equation}\label{eq:barvinok-gen-8}T:=\sum_{j\in \sigma} b_jv_j\otimes v+\sum_{j\in \sigma} v\otimes b_jv_j+\left(\sum_{j\in \sigma} b_j\right)v\otimes v.\end{equation}
Taking into account \eqref{eq:barvinok-gen-7} we check that, for every $x\in S^{n-1}$,
\begin{equation}|\langle Tx,x\rangle |= \left(\sum_{j\in \sigma} b_j\right)\langle x,v\rangle^2\ls \left(\sum_{j\in \sigma} b_j\right)\|v\|_2^2\ls\varepsilon .\end{equation}
Choosing $\varepsilon =1/2$ we see that $\|T\|\ls \frac{1}{2}$, and this finally gives
\begin{equation}\label{eq:barvinok-gen-9}\frac{1}{2}I_n\preceq A:=\sum_{j\in\sigma }b_jv_j\otimes v_j\preceq \frac{11}{2}I_n.\end{equation}
We are now able to show that
\begin{equation}\label{eq:contain}K:={\rm conv}(\{ v_j:j\in\sigma\cup\tau \})\supseteq \frac{c}{n^{3/2}}B_2^n.\end{equation}
Let $x\in S^{n-1}$. We set $\delta =\min\{ \langle x,v_j\rangle :j\in\sigma \}$; note that $|\delta |\ls 1$ and $\langle x,v_j\rangle -\delta\ls 2$
for all $j\in\sigma $. If $\delta <0$, we write
\begin{align*}
p_K(Ax) &\ls p_K\left (Ax-\delta\sum_{j\in\sigma }b_jv_j\right )+p_K\left (\delta \sum_{j\in\sigma }b_jv_j\right )\\
&= p_K\left (\sum_{j\in\sigma }b_j(\langle x,v_j\rangle -\delta )v_j\right )+p_K\left (\delta \Big(\sum_{j\in\sigma }b_j\Big ) (-v)\right )\\
&\ls \sum_{j\in\sigma }b_j(\langle x,v_j\rangle -\delta )p_K(v_j)-\delta \Big(\sum_{j\in\sigma }b_j\Big ) p_K(v)\\
&\ls \Big(\sum_{j\in\sigma }b_j\Big )\left [2+\sqrt{n/2}p_K(w)\right ]\\
&\ls c_1n^{3/2},
\end{align*}
using the fact that $w\in K$, and hence $p_K(w)\ls 1$. If $\delta \gr 0$ then $\langle x,v_j\rangle\gr 0$ for all $j\in\sigma $,
therefore
\begin{equation}p_K(Ax)=p_K\left (\sum_{j\in\sigma }b_j\langle x,v_j\rangle v_j\right )\ls \sum_{j\in\sigma }b_j\langle x,v_j\rangle p_K(v_j)
\ls \sum_{j\in\sigma }b_j\ls 5n\end{equation}
In any case,
\begin{equation}p_{A^{-1}(K)}(x)\ls c_2n^{3/2}\end{equation}
for all $x\in S^{n-1}$, where $c_2>0$ is an absolute constant. Together with \eqref{eq:barvinok-gen-9} this shows that
\begin{equation}\frac{1}{2}B_2^n\subseteq A(B_2^n)\subseteq c_2n^{3/2}K.\end{equation}
Since ${\rm card}(\sigma\cup\tau )\ls \alpha_1(1/2)n+n+1\ls (\alpha_1(1/2)+2)n$, the
proof is complete. \end{proof}

\medskip

\noindent {\bf Proof of Theorem \ref{th:main-gen}.} Let $P=\bigcap_{i\in I}P_i$. We may assume that $0\in {\rm int}(P)$ and that the polar body
\begin{equation}\label{eq:main-gen-1}P^{\circ}=\mathrm{conv}\left(\bigcup_{i\in I} P_i^{\circ}\right)\end{equation}
is in L\"{o}wner's position. Using Proposition \ref{prop:main-gen} for $C=P^{\circ }$ we may find $X=\{ v_1,\ldots ,v_s\}\subset P^{\circ }$ with ${\rm card}(X)=s\ls \alpha n$ such that
\begin{equation}\label{eq:main-gen-2}P^{\circ}\subseteq cn^{3/2}\mathrm{conv}(\{v_1,\ldots ,v_s\}),\end{equation}
where $c>0$ is an absolute constant. From the proof of Proposition \ref{prop:main-gen} we see that $v_1,\ldots ,v_s$ may be chosen to be
contact points of $P^{\circ }$ with its minimal volume ellipsoid, and hence it is simple to check that we actually have
$v_j\in \bigcup_{i\in I}P_i^{\circ }$ for all $j=1,\ldots ,s$. In other words, we may find $i_1,\ldots ,i_s\in I$ such
that $v_j\in P_{i_j}$, $j=1,\ldots ,s$. Then, \eqref{eq:main-gen-2} implies that
\begin{equation}\label{eq:main-gen-3}P^{\circ}\subseteq cn^{3/2}\mathrm{conv}(P_{i_1}^{\circ }\cup\cdots \cup P_{i_s}^{\circ }),\end{equation}
and passing to the polar bodies, we get
\begin{equation}\label{eq:main-gen-4}P_{i_1}\cap\cdots\cap P_{i_s}\subseteq cn^{3/2}P\end{equation}
as claimed. $\hfill\Box $

\begin{remark}\rm In \cite{BKP-1982} it is proved that if $\{P_i: i\in I\}$ is a finite family of convex bodies in ${\mathbb R}^n$ with ${\rm diam}\left (\bigcap_{i\in I}P_i\right )=1$, then there exist $s\ls n(n+1)$ and $i_1,\ldots i_s\in I$ such that
\begin{equation}\label{eq:diameter-rem-1}
{\rm diam}(P_{i_1}\cap\cdots\cap P_{i_s})\ls \sqrt{2n(n+1)}.
\end{equation}
Then, a scheme is described which allows one to further reduce the number of the bodies $P_{i_j}$ and keep some control on the diameter. The lemma
which allows this reduction states the following: Let $m>2n$ and $P_1,\ldots ,P_m$ be convex bodies in ${\mathbb R}^n$ such that $0\in P_1\cap\cdots\cap P_m$.
If the circumradius of $P_1\cap\cdots\cap P_m$ is equal to $1$ then we can find $1\ls j\ls m$ such that the circumradius of $\bigcap_{i=1,i\neq j}^mP_i$ is at most
$\frac{m}{m-2d}$. Starting with Theorem \ref{th:main-gen} and using the same lemma, for any finite family $\{P_i: i\in I\}$ of convex bodies 
in ${\mathbb R}^n$ with ${\rm diam}\left (\bigcap_{i\in I}P_i\right )=1$ we first find $s\ls \alpha n$ and $i_1,\ldots i_s\in I$ such that
\begin{equation}\label{eq:diameter-rem-2}
{\rm diam}(P_{i_1}\cap\cdots\cap P_{i_s})\ls c_1n^{3/2},
\end{equation}
where $c_1>0$ is an absolute constant, and then we can keep $2n$ of the $P_{i_j}$'s so that the diameter of their intersection is bounded by
\begin{equation}c_1n^{3/2}\prod_{m=2n+1}^s\frac{m}{m-2n}=cn^{3/2}\binom{s}{2n}\ls cn^{3/2}\left (\frac{e\alpha }{2}\right )^{2n}\ls c_2^n,
\end{equation}
where $c_2>0$ is an absolute constant. This improves the estimate from \cite{BKP-1982} (for the original question studied there) but it is still
exponential in the dimension.
\end{remark}

\bigskip

\noindent {\bf Acknowledgement.} I acknowledge founding support from Onassis Foundation. I would like to thank Apostolos Giannopoulos for useful discussions.

\bigskip

\bigskip

\footnotesize
\bibliographystyle{amsplain}

\medskip

\thanks{\noindent {\bf Keywords:}  Convex bodies, Helly's theorem, approximate John's decomposition, polytopal approximation.}

\smallskip

\thanks{\noindent {\bf 2010 MSC:} Primary 52A23; Secondary 52A35, 46B06.}

\bigskip

\bigskip

\noindent \textsc{Silouanos \ Brazitikos}: Department of
Mathematics, University of Athens, Panepistimioupolis 157-84,
Athens, Greece.

\smallskip

\noindent \textit{E-mail:} \texttt{silouanb@math.uoa.gr}

\end{document}